\newtheorem{theorem}{Theorem}
\newtheorem{lemma}[theorem]{Lemma}
\newtheorem{proposition}[theorem]{Proposition}
\newtheorem{corollary}[theorem]{Corollary}
\theoremstyle{definition}
\begin{document}

\title[Weak identities in the algebra of symmetric matrices]
{Weak identities in the algebra\\
of symmetric matrices of order two}
\author[V.S. Drensky]{Vesselin S. Drensky}

\address{Institute of Mathematics and Informatics,
Bulgarian Academy of Sciences,
Acad. G. Bonchev Str., Block 8, 1113 Sofia, Bulgaria}
\email{drensky@math.bas.bg}

\thanks{Published as Vesselin S. Drensky,
Weak identities in the algebra of symmetric matrices of order two (Russian), Pliska Stud. Math. Bulgar. 8 (1986), 77-84.
Zbl 0668.16009, MR0866647.}
\date{}

\maketitle

\begin{abstract}
We describe the weak polynomial identities of the Jordan algebra of symmetric $2\times 2$ matrices over a field of characteristic zero.
The corresponding weak verbal ideal is generated by the standard identity of degree four and the metabelian identity.
\end{abstract}

Let $K_2$ be the algebra of $2\times 2$ matrices over a field $K$ and let $H(K_2)$ be the Jordan algebra of the symmetric matrices in $K_2$.
A.M. Slinko \cite[Problem 2.96]{2} stated the problem to find the basis of the weak identities in the pair $(K_2,H(K_2))$
in the case of a field of characteristic zero. A partial answer was given in \cite{10} where the description was given of the module structure
of the relatively free pair corresponding to the weak T-ideal $T(K_2,H(K_2))$. The main purpose of the present paper is to give
the complete answer to the problem of A.M. Slinko:

\noindent{\bf Theorem.}
{\it Let $K$ be a field of characteristic $0$. Then the basis of the weak identities of the pair $(K_2,H(K_2))$ consists of the standard identity
\begin{equation}\label{eq 1}
S_4(x_1,x_2,x_3,x_4)=0
\end{equation}
and the metabelian identity
\begin{equation}\label{eq 2}
[[x_1,x_2],[x_3,x_4]]=0.
\end{equation}
}

\section{Preliminaries}\label{Section 1}
In the sequel $K$ will be a fixed field of characteristic 0. All associative and Jordan algebras will be unitary and over $K$.
The existence of the unit does not decrease the generality of the considerations because both algebras $K_2$ and $H(K_2)$ are unitary.
All necessary information on identities of Jordan algebras can be found in \cite{5}. The notation is similar to that in \cite{10} and \cite{9}.

Let us denote by $A_m$ be the free associative algebra $A(x_1,\ldots,x_m)$ with free generators $x_1,\ldots,x_m$ and let $A=A_{\infty}$.
Additionally,
\[
x_1\circ x_2=x_1x_2+x_2x_1,\quad
x_1\circ\cdots\circ x_{n-1}\circ x_n=(x_1\circ\cdots\circ x_{n-1})\circ x_n,
\]
\[
[x_1,x_2]=x_1x_2-x_2x_1,\quad
[x_1,\ldots,x_{n-1},x_n]=[[x_1,\ldots,x_{n-1}],x_n].
\]
Let $B_m^{(n)}$ be the vector subspace of $A_m$ spanned by the products of commutators $[x_{i_1},\ldots]\cdots[\ldots,x_{i_n}]$,
$\displaystyle B_m=\sum_{n\geq 0}B_m^{(n)}$, and let $P_n$ be the set of multilinear polynomials of degree $n$ in $A_n$.
Then $\Gamma_n=P_n\cap B_n^{(n)}$ is the subset of the proper polynomials in $P_n$.
The vector spaces $P_n$ and $A_m$ have, respectively, the structure of left $\text{Sym}(n)$- and $GL(m,K)$-modules (see e.g. \cite[\S 1]{3}),
where $\text{Sym}(n)$ is the symmetric group acting on the set $\{1,\ldots,n\}$ and $GL(m,K)$ is the general linear group.
The subspaces $\Gamma_n$ and $B_m^{(n)}$ are, respectively, submodules of $P_n$ and $A_m$.
The irreducible $\text{Sym}(n)$- and $GL(m,K)$-modules are described by Young diagrams and partitions
$\lambda=(\lambda_1,\ldots,\lambda_r)$ of $n$. We shall denote the corresponding modules by $M(\lambda)$ and $N_m(\lambda)$.

The algebra $A_m$ is the universal enveloping algebra of the free Lie algebra $L_m$. Using the Poincar\'e-Birkhoff-Witt theorem
it is easy to show that if $f_{ks}(x_1,\ldots,x_k)$, $s=1,\ldots,\gamma_k$, is a basis of the vector space $\Gamma_k$, then $P_n$ has a basis
\[
x_{i_1}\cdots x_{i_{n-k}}f_{ks}(x_{j_1},\ldots,x_{j_k}),\quad i_1<\cdots< i_{n-k},\quad j_1<\cdots< j_k,
\]
\[
\{i_1,\ldots,i_{n-k},j_1,\ldots,j_k\}=\{1,\ldots,n\},\quad s=1,\ldots,\gamma_k,\quad k=0,1,\ldots,n.
\]
Similarly, if $g_{ks}(x_1,\ldots,x_m)$, $s=1,\ldots,\beta_k$, is a basis of $B_m^{(k)}$, then $A_m$ has a basis
\[
x_1^{\alpha_1}\cdots x_m^{\alpha_m}g_{ks}(x_1,\ldots,x_m),\quad \alpha_i\geq 0, i=1,\ldots,m, \quad s=1,\ldots,\beta_k,k=0,1,2,\ldots.
\]

The algebra $A_m$ has an involution $\ast$ defined by the equality
\[
(x_{i_1}\cdots x_{i_n})^{\ast}=x_{i_n}\cdots x_{i_1}.
\]
The Jordan algebra of the symmetric elements $H(A_m,\ast)$ contains the free special Jordan algebra $SJ_m$.
By the theorem of P.M. Cohn \cite[p. 76 of the Russian original]{5} $H(A_m,\ast)=SJ_m$ for $m\leq 3$.

In the sequel we shall use that
\[
[x_1,\ldots,x_n]^{\ast}=(-1)^{n-1}[x_1,\ldots,x_n]
\]
and the commutators of odd length are Jordan elements, i.e. belong to $SJ=SJ_{\infty}$.

Let $G$ be a special Jordan algebra and let $R$ be its associative enveloping algebra. By analogy with \cite[Definitions 1--3]{7}
the polynomial $f(x_1,\ldots,x_n)$ in $A$ is a weak identity for the pair $(R,G)$ if $f(g_1,\ldots,g_n)=0$ for all $g_1,\ldots,g_n\in G$.
The set $T=T(R,G)$ of all weak identities of the pair $(R,G)$ is a weak T-ideal (or a weak verbal ideal) in $A$.
The polynomials $\{f_i(x_1,\ldots,x_{n_i})\}$ generate $T$ as a weak T-ideal (i.e. are a basis of $T$), if $T$ is generated as an ordinary ideal
by the set $\{f_i(u_1,\ldots,u_{n_i})\mid u_j\in SJ\}$.
If follows from the unitarity of the algebras $R$ and $G$ and from the above comments on the bases of $P_n$ and $A_m$
that the basis of the identities of $T$ can be chosen in $\bigcup(\Gamma_n\cap T)$, $n\geq 2$.
Similarly, all identities in $m$ variables in $T$ follow from $\bigcup(B_m^{(n)}\cap T)$, $n\geq 2$.
By \cite[Lemma 2.3]{9} the $\text{Sym}(n)$-module $\Gamma_n/(\Gamma_n\cap T)$ and the $GL(m,K)$-module $B_m^{(n)}/B_m^{(n)}\cap T)$
have the same structure: If
\[
\Gamma_n/(\Gamma_n\cap T)\cong \sum k_{\lambda}M(\lambda),
\]
then
\[
B_m^{(n)}/B_m^{(n)}\cap T)\cong \sum k_{\lambda}N_m(\lambda).
\]
(In all the paper the sums of modules are direct.)

The proof of the next lemma repeats the proof of \cite[Lemma 1]{10}:

\begin{lemma}\label{Lemma 1}
Let $G$ be a Lie algebra with an ordered basis $g_1<g_2<\cdots$ and let $U(G)$ be its universal enveloping algebra. Then $U(G)$ has a basis
\[
g_{i_1}\circ g_{i_2}\circ \cdots\circ g_{i_r},\quad i_1\geq i_2\geq\cdots\geq i_r.
\]
\end{lemma}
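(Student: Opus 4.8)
The plan is to deduce the statement from the Poincar\'e--Birkhoff--Witt theorem by comparing the proposed set with the PBW basis of \emph{ordered associative} monomials. Equip $U(G)$ with its standard filtration $U_0\subseteq U_1\subseteq\cdots$, where $U_r$ is the span of all associative products $g_{j_1}\cdots g_{j_s}$ of length $s\le r$ (so $U_0=K\cdot 1$ and $\bigcup_r U_r=U(G)$). By PBW the classes of the monomials $g_{i_1}\cdots g_{i_r}$ with $i_1\le\cdots\le i_r$ form a basis of $U_r/U_{r-1}$; this is the only external input.

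The two computational observations are elementary. First, interchanging two adjacent factors in an associative monomial of length $r$ replaces the pair $g_ag_b$ by $g_bg_a$, the difference being the length-$(r-1)$ product in which $[g_a,g_b]\in G$ occupies that position; hence any reordering of $g_{j_1}\cdots g_{j_r}$ agrees with it modulo $U_{r-1}$. Second, expanding the Jordan product one factor at a time via $u\circ g=ug+gu$ shows, by induction on $r\ge 1$, that $g_{i_1}\circ\cdots\circ g_{i_r}$ is a sum of $2^{r-1}$ associative monomials, each being a product of $g_{i_1},\dots,g_{i_r}$ in some order; together with the first observation this gives
\[
g_{i_1}\circ\cdots\circ g_{i_r}\equiv 2^{r-1}g_{i_1}\cdots g_{i_r}\pmod{U_{r-1}}.
\]
Since $K$ has characteristic zero, it follows that for each $r$ the classes modulo $U_{r-1}$ of the elements $g_{i_1}\circ\cdots\circ g_{i_r}$ with $i_1\ge\cdots\ge i_r$ form a basis of $U_r/U_{r-1}$ (the reverse-ordered monomial being congruent to the ordered one by the first observation, and the nonzero scalars $2^{r-1}$ being harmless).

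From here the conclusion is the usual ``leading term'' argument. For spanning, I would show by induction on $r$ that $U_r$ lies in the span of the Jordan products $g_{i_1}\circ\cdots\circ g_{i_k}$ with $k\le r$ and $i_1\ge\cdots\ge i_k$: given $u\in U_r$, subtract from $u$ the linear combination of length-$r$ Jordan products matching the class of $u$ in $U_r/U_{r-1}$, land in $U_{r-1}$, and invoke the inductive hypothesis. For linear independence, in a putative nontrivial relation let $r$ be the maximal length of a Jordan product occurring with nonzero coefficient; reducing modulo $U_{r-1}$ kills the shorter products and leaves a nontrivial relation among distinct basis elements of $U_r/U_{r-1}$, a contradiction. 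The one step deserving care — and the only place the Lie structure enters beyond PBW — is the second computational observation: the Jordan expansion must be organized so that every monomial appearing uses exactly the multiset $\{g_{i_1},\dots,g_{i_r}\}$, which is what makes its class in $U_r/U_{r-1}$ a scalar multiple of a single PBW basis vector; once that is in place, the rest is routine bookkeeping.
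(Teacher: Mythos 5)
Your proof is correct, and it is essentially the standard argument behind this lemma: the paper itself gives no proof here but defers to \cite[Lemma 1]{10}, where the statement is obtained in the same way — via the Poincar\'e--Birkhoff--Witt theorem and the filtration of $U(G)$, using that $g_{i_1}\circ\cdots\circ g_{i_r}$ with $i_1\geq\cdots\geq i_r$ has leading term $2^{r-1}$ times the corresponding PBW monomial (nonzero since $\operatorname{char}K=0$). Your identification of the key computational point (every monomial in the expansion of the $\circ$-product uses exactly the same multiset of factors) is exactly the step that makes the leading-term argument work.
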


\begin{corollary}\label{Corollary 2}
As a vector space the algebra $A_3$ is spanned by the elements $u$ and $u[v,w]$, where $u,v,w\in SJ_3$.
\end{corollary}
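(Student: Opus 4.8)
The plan is to prove the equality directly (without even invoking Lemma~\ref{Lemma 1}) by exhibiting
\[
W:=\operatorname{span}\{\,u,\ u[v,w]\mid u,v,w\in SJ_3\,\}=SJ_3+SJ_3\cdot[SJ_3,SJ_3]
\]
as a subspace of $A_3$ which contains $1$ and is carried into itself by left multiplication by each generator $x_i$; here $[SJ_3,SJ_3]$ denotes the span of all $[v,w]$ with $v,w\in SJ_3$. Since the $x_i$ generate $A_3$ and every element is a linear combination of monomials $x_{i_1}\cdots x_{i_n}$, an obvious induction on $n$ then gives $W=A_3$, which is the assertion of the corollary.

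The engine of the argument is Cohn's theorem quoted above, $H(A_3,\ast)=SJ_3$. First I would record the elementary bookkeeping with the involution: if $a\in SJ_3$ is symmetric and $p$ is skew, then $[a,p]$ is symmetric, hence $[a,p]\in SJ_3$; if $p,q$ are both skew, then $p\circ q$ is symmetric (so $p\circ q\in SJ_3$) while $[p,q]$ is skew; and for any $v,w\in SJ_3$ the commutator $[v,w]$ is skew. I would also need one "Jacobi reduction": expanding $[[v_1,w_1],[v_2,w_2]]=[[[v_1,w_1],v_2],w_2]-[[[v_1,w_1],w_2],v_2]$ and noting that $[[v_1,w_1],v_2]$ and $[[v_1,w_1],w_2]$ are commutators of a skew element with a symmetric one, hence lie in $SJ_3$, one obtains $[[v_1,w_1],[v_2,w_2]]\in[SJ_3,SJ_3]$ whenever all $v_i,w_i\in SJ_3$.

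Granting this, the verification that $x_iW\subseteq W$ is short. We have $1\in SJ_3\subseteq W$. For $a\in SJ_3$ write $x_ia=\tfrac12(x_i\circ a)+\tfrac12[x_i,a]$: here $x_i\circ a\in SJ_3$ (as $SJ_3$ is a Jordan subalgebra) and $[x_i,a]\in[SJ_3,SJ_3]$, so $x_ia\in W$. For $a[v,w]$ with $a,v,w\in SJ_3$ write $x_i\cdot a[v,w]=(x_ia)[v,w]=\tfrac12(x_i\circ a)[v,w]+\tfrac12[x_i,a][v,w]$; the first summand lies in $SJ_3\cdot[SJ_3,SJ_3]\subseteq W$, and in the second both $[x_i,a]$ and $[v,w]$ are skew, so $[x_i,a][v,w]=\tfrac12\big([x_i,a]\circ[v,w]\big)+\tfrac12[[x_i,a],[v,w]]$ with $[x_i,a]\circ[v,w]\in SJ_3\subseteq W$ and $[[x_i,a],[v,w]]\in[SJ_3,SJ_3]\subseteq W$ by the Jacobi reduction. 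Hence $x_i\cdot a[v,w]\in W$ as well, and the induction closes.

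The one genuinely nontrivial ingredient is Cohn's theorem: it is exactly what turns the symmetric associative elements $p\circ q$ and $[a,p]$ appearing above back into \emph{Jordan} elements, i.e. members of $SJ_3$; without it the corollary has no reason to survive ($H(A_m,\ast)=SJ_m$ fails for $m\ge 4$). The only point demanding a little care is to make sure no step secretly uses $SJ_3$ being closed under the \emph{associative} product — it is not — which is why the computations are arranged so that the associative product enters only through $x_ia=\tfrac12 x_i\circ a+\tfrac12[x_i,a]$ and $pq=\tfrac12 p\circ q+\tfrac12[p,q]$. An alternative route, closer in spirit to Lemma~\ref{Lemma 1}, is to expand $A_3=U(L_3)$ into Jordan products of an ordered basis of left-normed Lie commutators, using that the odd-length ones lie in $SJ_3$ and the even-length ones are skew; but reducing Jordan products that contain several even-length factors is more delicate than the argument above.
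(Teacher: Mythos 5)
Your argument is correct, and it reaches the corollary by a genuinely different route than the paper. You prove that $W=SJ_3+SJ_3\cdot[SJ_3,SJ_3]$ contains $1$ and is stable under left multiplication by the generators $x_i$, using only the decomposition $pq=\tfrac12 p\circ q+\tfrac12[p,q]$, the symmetric/skew bookkeeping under $\ast$, Cohn's theorem $H(A_3,\ast)=SJ_3$ to convert symmetric elements back into Jordan ones, and the Jacobi identity to see that $[[v_1,w_1],[v_2,w_2]]$ lies in $[SJ_3,SJ_3]$; induction on the length of monomials then gives $W=A_3$. All the individual verifications check out (note that $1\in SJ_3$, so $[v,w]\in SJ_3\cdot[SJ_3,SJ_3]$, which you implicitly use when placing $[x_i,a]$ in $W$). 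The paper instead invokes Lemma \ref{Lemma 1}: it takes an ordered basis of left-normed commutators of $L_3$, splits them by parity of length (symmetric versus skew), obtains from the Poincar\'e--Birkhoff--Witt theorem a spanning set of ordered $\circ$-products, and observes via Cohn's theorem that a product with an even number of skew factors is already in $SJ_3$, while an odd number reduces to a Jordan evaluation of $x_1\circ[x_2,x_3]=-[x_1,[x_2,x_3]]+2x_1[x_2,x_3]$. Your approach is more elementary and self-contained (no ordered commutator basis, no Lemma \ref{Lemma 1}), at the price of being a pure closure argument; the paper's approach yields a more structured spanning set of ordered Jordan monomials, and, contrary to your closing remark, the several even-length factors cause no real difficulty there, since an even number of skew factors makes the whole product symmetric and Cohn's theorem absorbs it into $SJ_3$ in one stroke. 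Both proofs rest on the same essential ingredient, Cohn's theorem for $m\leq 3$, which is indeed why the statement is special to three variables.
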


\begin{proof}
We choose an ordered basis of leftnormed commutators in the free Lie algebra $L_3$, $u_1<u_2<\cdots<v_1<v_2<\cdots$,
where $u_i$ (respectively $v_j$) are commutators of even (odd) length (i.e. $u_i^{\ast}=-u_i$, $v_j^{\ast}=v_j$).
By Lemma \ref{Lemma 1} the algebra $A_3$ has a basis consisting of the polynomials
\[
t=v_{j_1}\circ \cdots\circ v_{j_n}\circ u_{i_1}\circ\cdots\circ u_{i_m},\quad j_1\geq \cdots\geq j_n,\quad i_1\geq \cdots\geq i_m.
\]
If $m$ is even, then $t^{\ast}=t$ and by the theorem of P.M. Cohn $t\in SJ_3$. If $m$ is odd, then $t=t_1\circ u_{i_m}$, $t_1\in SJ_3$.
Therefore $t$ is a Jordan evaluation of $x_1$ or $x_1\circ[x_2,x_3]$ (instead of $x_1$ we may have 1). But
\[
x_1\circ[x_2,x_3]=-[x_1,[x_2,x_3]]+2x_1[x_2,x_3]
\]
and $[x_1,[x_2,x_3]]\in SJ_3$ which completes the proof.
\end{proof}

\section{Weak Capelli identities}\label{Section 2}
Recall that the $k$-th weak Capelli identities are polynomials in $P_n$, $n\geq k$, which are alternating in the variables $x_1,\ldots,x_k$.
The Capelli identities are linear combinations of multilinear polynomials of the form
\[
\sum(-1)^{\sigma}u_1x_{\sigma(1)}u_2x_{\sigma(2)}\cdots u_kx_{\sigma(k)}u_{k+1},\quad \sigma\in\text{Sym}(k),
\]
and $u_1,\ldots,u_{k+1}$ are monomials.

\begin{lemma}\label{Lemma 3}
Let $(R,G)$ be a pair and let $T$ be the corresponding weak T-ideal, let $F_m=A_m/(A_m\cap T)$
and let $\widetilde{B}_m$ and $\widetilde{\Gamma}_n$ be the images of $B_m$ and $\Gamma_n$ under the canonical homomorphism
$A_m\to F_m$. Let all polynomials in $\widetilde{\Gamma}_n$ with $k$ alternating variables, $n\geq k$, are equal to $0$. Then

{\rm (i)} $\widetilde{B}_m\cong \sum N_m(\lambda_1,\ldots,\lambda_{k-1})$;

{\rm (ii)} $\widetilde{F}_m\cong \sum N_m(\mu_1,\ldots,\mu_k)$;

{\rm (iii)} The pair $(R,G)$ satisfies all $(k+1)$-th weak Capelli identities.
\end{lemma}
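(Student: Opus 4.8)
The plan is to translate the hypothesis into the representation theory of $\text{Sym}(n)$ and $GL(m,K)$ and to read off (i)--(iii) from it, using the correspondence between $\text{Sym}(n)$- and $GL(m,K)$-modules recalled in Section~\ref{Section 1} together with the Poincar\'e-Birkhoff-Witt basis of $A_m$. I would first invoke the standard dictionary between alternating variables and the number of rows of a Young diagram (see e.g. \cite[\S 1]{3}): a nonzero multilinear polynomial that is alternating in some $d$ of its arguments generates a $\text{Sym}(n)$-submodule (respectively, a $GL(m,K)$-submodule) all of whose irreducible constituents $M(\lambda)$ (respectively $N_m(\lambda)$) have $\lambda'_1\geq d$, that is, have at least $d$ rows; conversely, each $M(\lambda)$ realized inside $P_n$ contains a nonzero polynomial alternating in exactly $\lambda'_1$ of its variables. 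Writing $\widetilde{\Gamma}_n\cong\sum k_\lambda M(\lambda)$, the hypothesis says precisely (using semisimplicity, which holds in characteristic $0$) that $k_\lambda=0$ for every $\lambda$ with at least $k$ rows, so
\[
\widetilde{\Gamma}_n\cong\sum k_\lambda M(\lambda_1,\ldots,\lambda_{k-1}),
\]
the sum running over the partitions of $n$ into at most $k-1$ parts; applying \cite[Lemma 2.3]{9} to the weak T-ideal $T$ carries this to $\widetilde{B}_m\cong\sum k_\lambda N_m(\lambda_1,\ldots,\lambda_{k-1})$, which is (i).

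For (ii) I would use the basis $x_1^{\alpha_1}\cdots x_m^{\alpha_m}g_{ks}(x_1,\ldots,x_m)$ of $A_m$ from Section~\ref{Section 1}, where the $g_{ks}$ form a basis of $B_m$. Via the symmetrization isomorphism $A_m=U(L_m)\cong S(L_m)$ and the splitting of the free Lie algebra $L_m$ into its degree-one part and the span of the commutators of length $\geq 2$, this basis exhibits a $GL(m,K)$-module isomorphism $A_m\cong\Omega(m)\otimes B_m$, where $\Omega(m)=K[x_1,\ldots,x_m]\cong\sum_{d\geq 0}N_m(d)$ carries its natural action. Since the pair $(R,G)$ is unitary, every weak identity is a consequence of proper ones (as noted in Section~\ref{Section 1}), so $A_m\cap T$ is carried to $\Omega(m)\otimes(B_m\cap T)$ and the isomorphism descends to $\widetilde{F}_m=F_m\cong\Omega(m)\otimes\widetilde{B}_m$. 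By the Pieri rule $N_m(d)\otimes N_m(\nu)\cong\sum N_m(\mu)$, the sum being over the diagrams $\mu$ obtained from $\nu$ by adjoining a horizontal strip of $d$ boxes; adjoining a horizontal strip to a diagram with at most $k-1$ rows yields a diagram with at most $k$ rows, so together with (i) this gives $F_m\cong\sum N_m(\mu_1,\ldots,\mu_k)$, which is (ii).

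Finally, (iii) follows from (ii). It is enough to show that $T$ contains every multilinear polynomial that is alternating in some $k+1$ of its variables, since the $(k+1)$-th weak Capelli polynomials are of this form. Let $f\in P_n$ with $n\geq k+1$ be multilinear and alternating in $k+1$ variables. If the image of $f$ in $F_n$ were nonzero, then by the dictionary above it would generate a $GL(n,K)$-submodule of $F_n$ all of whose irreducible constituents have at least $k+1$ rows; but by (ii) the module $F_n$ has no such constituent. Hence $f\in T$, and therefore $(R,G)$ satisfies all $(k+1)$-th weak Capelli identities.

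The step I expect to cost the most care is the identification $F_m\cong\Omega(m)\otimes\widetilde{B}_m$ used in (ii). The Poincar\'e-Birkhoff-Witt basis gives a vector-space isomorphism $A_m\cong\Omega(m)\otimes B_m$ at once, but its $GL(m,K)$-equivariance and, above all, its compatibility with $T$ (that $A_m\cap T$ goes to $\Omega(m)\otimes(B_m\cap T)$) rest on the unitarity of $R$ and $G$: the substitutions $x_i\mapsto x_i+1$ are among the admissible ones, preserve $T$, and fix every product of commutators, which is precisely what lets one separate off the proper part of an identity. This is where the standing assumption that the algebras are unitary is genuinely used.
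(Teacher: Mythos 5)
Your proposal is correct and follows essentially the same route as the paper: part (i) via the dictionary between alternation in $k$ variables and Young diagrams with at least $k$ rows (the content of Regev's theorem cited as \cite{11}) combined with the $\text{Sym}(n)$--$GL(m,K)$ correspondence, part (ii) via the decomposition $F_m\cong K[x_1,\ldots,x_m]\otimes\widetilde{B}_m$ (which the paper cites from \cite{9} and \cite{10}, and whose proof via the substitutions $x_i\mapsto x_i+1$ you correctly sketch) together with the Pieri rule, and part (iii) by the same dictionary applied to $P_n/(P_n\cap T)$. The only difference is that you re-derive in outline what the paper simply cites, so there is nothing to correct.
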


\begin{proof}
(i) By \cite[Theorem 2]{11} the condition that all polynomials with $k$ alternating variables disappear means that
the irreducible components of the $\text{Sym}(n)$-module $\widetilde{\Gamma}_n$ correspond to Young diagrams with not more than $k-1$ rows, i.e.
$\widetilde{\Gamma}_n\cong \sum M(\lambda_1,\ldots,\lambda_{k-1})$.
In virtue of the correspondence between the module structures of $\widetilde{\Gamma}_n$ and $\widetilde{B}_m^{(n)}$ we obtain that
$\widetilde{B}_m\cong \sum N_m(\lambda_1,\ldots,\lambda_{k-1})$.

(ii) It follows from \cite[Theorem 2.6]{9} and \cite[Proposition 2]{10} that
\[
F_m\cong K[x_1,\ldots,x_m]\otimes_K\widetilde{B}_m,
\]
where $K[x_1,\ldots,x_m]$ is the ordinary polynomial algebra. But
\[
K[x_1,\ldots,x_m]\cong\sum N_m(n),\quad n\geq 0.
\]
Using the rule for the tensor product of $GL(m,K)$-modules \cite[Chapter 8]{1} we obtain that
\[
N_m(n)\otimes N_m(\lambda_1,\ldots,\lambda_{k-1})\cong\sum N_m(\lambda_1+n_1,\ldots,\lambda_{k-1}+n_{k-1},n_k),
\]
where $0\leq n_1$, $0\leq n_i\leq\lambda_{i-1}-\lambda_i$, $2\leq i\leq k-1$, $0\leq n_k\leq \lambda_{k-1}$,
$n_1+\cdots+n_k=n$. Hence the irreducible submodules of $F_m$ have Young diagrams with not more than $k$ rows.

(iii) The statement follows from \cite[Theorem 2]{11} and the correspondence between the $\text{Sym}(n)$- and $GL(m,K)$-module
structure of $P_n(P_n\cap T)$ and $F_m$ \cite[\S 1]{3}.
\end{proof}

Till the end of the paper we shall denote by $T$ the weak T-ideal generated by the identities (\ref{eq 1}) and (\ref{eq 2}).
It follows from \cite[Proposition 2.1]{4} that
\begin{equation}\label{eq 3}
\Gamma_4\cong M(3,1)+M(2^2)+M(2,1^2)+M(1^4).
\end{equation}
From here it is easy to see that the identities (\ref{eq 1}) and (\ref{eq 2}) generate $M(2,1^2)$ and $M(1^4)$
in this decomposition and are equivalent to the identity
\begin{equation}\label{eq 4}
\sum(-1)^{\sigma}[x_{\sigma(1)},x_{\sigma(2)}][x_{\sigma(3)},x_4]=0,\quad \sigma\in\text{Sym}(4).
\end{equation}
The pair $(K_2,H(K_2))$ satisfies the weak identities (\ref{eq 1}) and (\ref{eq 2}). It is well known for (\ref{eq 1})
and (\ref{eq 2}) can be checked directly because the commutator of two symmetric matrices is skew-symmetric and is proportional to $e_{12}-e_{21}$.
Hence
\begin{equation}\label{eq 5}
T\subseteq T(K_2,H(K_2)).
\end{equation}
The theorem will be established if we show that in (\ref{eq 5}) there is an equality.

In the sequel we shall work in $F=A/T$.

\begin{proposition}\label{Proposition 4}
All polynomials in three alternating variables in $\widetilde{\Gamma}_n$ are equal to zero.
\end{proposition}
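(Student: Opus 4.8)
The plan is to pass to the $GL$-side and then argue by induction on the degree. By \cite[Theorem 2]{11}, the same tool used in Lemma~\ref{Lemma 3}(i), the Proposition is equivalent to the assertion that $\widetilde\Gamma_n\cong\sum M(\lambda_1,\lambda_2)$ for every $n$, and, via the correspondence between the module structures of $\widetilde\Gamma_n$ and $\widetilde B_m^{(n)}$ recalled in Section~\ref{Section 1}, to $\widetilde B_m^{(n)}\cong\sum N_m(\lambda_1,\lambda_2)$ for all $m,n$. Thus it suffices to prove, by induction on $n$, that every multilinear proper polynomial $f(x_1,\dots,x_n)$ which is alternating in $x_1,x_2,x_3$ vanishes in $F$.

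For $n\leq 3$ there is nothing to prove, since $\Gamma_3\cong M(2,1)$ contains no polynomial alternating in three variables. For $n=4$ the decomposition \eqref{eq 3}, together with the fact that \eqref{eq 1} and \eqref{eq 2} generate precisely the submodule $M(2,1^2)+M(1^4)$ of $\Gamma_4$ (equivalently, are equivalent to \eqref{eq 4}), gives $\widetilde\Gamma_4\cong M(3,1)+M(2^2)$, so that every element of $\widetilde\Gamma_4$ alternating in three variables is already zero; this is the base of the induction.

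For the inductive step let $n\geq 5$ and let $f\in\Gamma_n$ be proper, multilinear and alternating in $x_1,x_2,x_3$, written as a sum of products of left-normed commutators. In $F$ the metabelian identity \eqref{eq 2} holds on all Jordan substitutions, in particular on commutators of odd length, which are Jordan; hence $[[x_1,x_2],[x_3,x_4]]=0$ in $F$, and consequently, using the Jacobi identity as well, the monomials of $f$ may be normalised — the tail (all entries past the first two) of a left-normed commutator of length $\geq 3$ becomes symmetric, left-normed commutators of length two commute past one another, and products of commutators simplify. One then antisymmetrises in $x_1,x_2,x_3$. A monomial in which $x_1,x_2,x_3$ all lie inside one commutator contributes zero: if they occupy the first three slots this is the Jacobi identity, and otherwise the symmetry of the tail reduces it to that case. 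In the remaining monomials one isolates, modulo $T$ and modulo corrections of lower degree, a factor $[x_i,x_j][x_k,y]$ with $\{i,j,k\}=\{1,2,3\}$ and $y$ free of $x_1,x_2,x_3$; antisymmetrised in $x_1,x_2,x_3$ this is the left-hand side of \eqref{eq 4} with $x_4$ specialised to $y$, hence $0$, while the lower-degree correction terms produced along the way are proper polynomials of degree $<n$ still alternating in $x_1,x_2,x_3$, so they lie in $T$ by the inductive hypothesis. Corollary~\ref{Corollary 2}, which identifies $A_3$, hence $F_3$, with $SJ_3+SJ_3[SJ_3,SJ_3]$, is what keeps these normal-form manipulations under control when few variables occur.

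The conceptual content is that \eqref{eq 1} and \eqref{eq 2} force the commutator datum of the pair to factor through a two-dimensional space — in $(K_2,H(K_2))$ the commutator of two symmetric matrices is a scalar multiple of $e_{12}-e_{21}$, and \eqref{eq 4} is exactly the condition that the associated bilinear form have rank at most two — so that by Schur--Weyl only Young diagrams with at most two rows can survive in $\widetilde\Gamma_n$. The main obstacle is to extract this purely from the two given identities: the bookkeeping in the inductive step, namely checking that \emph{every} product of commutators of degree $\geq 5$ becomes, after antisymmetrisation in three variables, visibly a consequence of the Jacobi identity together with \eqref{eq 2}, or else reducible in degree by \eqref{eq 4}, is where the real work lies.
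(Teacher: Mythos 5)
Your overall frame (induction on $n$, base $n=4$ from the decomposition \eqref{eq 3} and the fact that \eqref{eq 1}, \eqref{eq 2} generate $M(2,1^2)+M(1^4)$, then reduction of a product of left-normed commutators using Jacobi and the symmetry of tails modulo \eqref{eq 2}) matches the paper, and your treatment of the case where all three alternating variables sit in one commutator is fine. But the inductive step, which you yourself identify as ``where the real work lies,'' is not actually carried out, and the two points you wave past are exactly the nontrivial ones. First, the case $n=5$ does not follow from your generic mechanism and must be proved separately: the degree-$5$ alternating elements that survive your normalisation, such as $\sum(-1)^{\sigma}[x_{\sigma(1)},x_{\sigma(2)}]\,y\,[x_{\sigma(3)},x_4]$ and $\sum(-1)^{\sigma}[x_{\sigma(1)},x_{\sigma(2)},y][x_{\sigma(3)},x_4]$, are not of the shape ``$[x_i,x_j][x_k,y]$ times something,'' and they reappear as the terms $z$ in the general step, so the induction cannot get past them. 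The paper disposes of degree $5$ by a different argument (the decompositions of $P_5(L)$ and $\Gamma_5/P_5(L)$, and the identity \eqref{eq 6} obtained by substituting the Jordan element $x_1^2$ into \eqref{eq 2}); note that this uses a genuine non-variable Jordan substitution, whereas your proof only ever evaluates \eqref{eq 2} on variables and odd commutators, so it is not even clear your toolkit generates enough of the weak T-ideal at degree $5$.

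Second, your claim that one can isolate a factor $[x_i,x_j][x_k,y]$ ``modulo $T$ and modulo corrections of lower degree'' is unsupported: in the multilinear setting, commuting the interposed factors past the commutators carrying $x_1,x_2,x_3$ produces terms of the \emph{same} degree (longer commutators, fewer factors), not of lower degree, so the appeal to the inductive hypothesis there does not apply as stated. The paper's way around this is the essential missing ingredient in your write-up: from the inductive hypothesis it derives the fourth weak Capelli identities (Lemma \ref{Lemma 3}(iii)), which allow the monomials separating the relevant commutators to be replaced by elements in at most three variables, and then Corollary \ref{Corollary 2} puts them in the normal forms $1$, $y_1$, $[y_1,y_2]$, $[y_1,y_2]y_3$, after which \eqref{eq 2} lets the length-two commutator factors be moved aside. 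Even then the configurations with the three alternating variables in three distinct commutators (the analogues of \eqref{eq 11}--\eqref{eq 13}) need the extra identities \eqref{eq 14}, \eqref{eq 15} to be reduced to \eqref{eq 4}; your sketch does not address these configurations at all. So there is a genuine gap: without the degree-$5$ case and without a mechanism of Capelli type for the interposed factors, the inductive step does not close.
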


\begin{proof}
We shall proceed by induction on $n$.
The base of the induction $n=4$ holds because in the decomposition (\ref{eq 3}) the modules $M(2,1^2)$ and $M(1^4)$ belong to $T$.
Let $f(x_1,\ldots,x_n)\in\widetilde{\Gamma}_n$ is alternating in $x_1,x_2,x_3$. First we shall consider the case $n=5$. By \cite[Theorem 2.3]{3}
the $\text{Sym}(5)$-module of the Lie elements in $\Gamma_5$ decomposes as
\[
P_5(L)\cong M(4,1)+M(3,2)+M(3,1^2)+M(2^2,1)+M(2,1^3).
\]
By \cite[Remark 2.8]{3} the sum of the latter three submodules is generated by the identity $[[x_1,x_2],[x_3,x_4],x_5]=0$
which is a weak consequence of (\ref{eq 2}). Hence, if the considered polynomial $f$ is a Lie element, then it vanishes in $\widetilde{\Gamma}_5$.
By \cite[Remark 1.2 and Proposition 2.4]{4} we can work in $\Gamma_5$ modulo $P_5(L)$ and
\[
\Gamma_5/P_5(L)\cong M(3,2)+M(3,1^2)+M(2^2,1)+M(2,1^3).
\]
We substitute in (\ref{eq 2}) $x_1$ by the Jordan element $x_1^2$ and obtain the consequence
\[
0=[[x_1,x_2]\circ x_1,[x_3,x_4]]=[[x_1,x_2],[x_3,x_4]]\circ x_1+[x_1,x_2]\circ[x_1,[x_3,x_4]],
\]
i.e. $F$ satisfies
\begin{equation}\label{eq 6}
[x_1,x_2]\circ[x_3,x_4,x_1]=0.
\end{equation}
In the proof of \cite[Lemma 3.2]{4} we established that modulo $P_5(L)$ the identities from the submodules $M(3,1^2)$, $M(2^2,1)$ and $M(2,1^3)$
of $\Gamma_5$ follow from (\ref{eq 6}). In this way we complete the proof for $n=5$. Later we shall need also that the identity
\begin{equation}\label{eq 7}
[x_2,x_1,x_1]\circ[x_2,x_1]=0
\end{equation}
is also a consequence of (\ref{eq 6}).

Now we shall consider the general case. By \cite[page 154 of the Russian original]{6} every element of $\widetilde{\Gamma}_n$ can be written as a linear combination
of products of canonical commutators $[x_{i_1},x_{i_2},\ldots,x_{i_k}]$, $i_1>i_2<\cdots<i_k$. Besides
\[
\sum(-1)^{\sigma}[x_{\sigma(1)},x_{\sigma(2)},x_{\sigma(3)}]=0,
\]
\[
\sum(-1)^{\sigma}[x_i,x_{\sigma(1)},x_{\sigma(2)}]=[x_i,x_1,x_2]-[x_i,x_2,x_1]
\]
\[
=-[x_1,x_2,x_i]=-\frac{1}{2}\sum(-1)^{\sigma}[x_{\sigma(1)},x_{\sigma(2)},x_i],
\]
\[
\sum(-1)^{\sigma}[x_{i_1},x_{i_2},x_{\sigma(1)},x_{\sigma(2)}]=\frac{1}{2}\sum(-1)^{\sigma}[[x_{i_1},x_{i_2}],[x_{\sigma(1)},x_{\sigma(2)}]].
\]
Hence we may assume that the alternating variables are in the most left position in two or three commutators
and $f(x_1,\ldots,x_n)$ is a linear combination of polynomials of the following kinds:
\begin{equation}\label{eq 8}
\begin{array}{c}
\sum(-1)^{\sigma}u_1[x_{\sigma(1)},x_{\sigma(2)},x_i,\ldots]u_2[x_{\sigma(3)},x_j,\ldots]u_3,\\
\\
\sum(-1)^{\sigma}u_1[x_{\sigma(1)},x_i,\ldots]u_2[x_{\sigma(2)},x_{\sigma(3)},x_j,\ldots]u_3,\\
\\
\sum(-1)^{\sigma}u_1[x_{\sigma(1)},x_i,\ldots]u_2[x_{\sigma(2)},x_j,\ldots]u_3[x_{\sigma(3)},x_k,\ldots]u_4.
\end{array}
\end{equation}
Here the summation is on $\sigma\in\text{Sym}(3)$ and $u_1,u_2,u_3,u_4$ are products of commutators.
We shall consider the first and the third cases. The second case is similar.
We express the commutators $u_1,u_2,u_3,u_4$ as a linear combination of monomials and in
$[x_{\sigma(1)},x_{\sigma(2)},x_i,\ldots]$, $[x_{\sigma(3)},x_j,\ldots]$,
$[x_{\sigma(1)},x_i,\ldots]$, $[x_{\sigma(2)},x_j,\ldots]$, $[x_{\sigma(3)},x_k,\ldots]$ we leave only the inner commutators of length 2.
In this way we write (\ref{eq 8}) as a linear combination of
\begin{equation}\label{eq 9}
\begin{array}{c}
\sum(-1)^{\sigma}v_1[x_{\sigma(1)},x_{\sigma(2)}]v_2[x_{\sigma(3)},x_j]v_3,\\
\\
\sum(-1)^{\sigma}w_1[x_{\sigma(1)},x_i]w_2[x_{\sigma(2)},x_j]w_3[x_{\sigma(3)},x_k]w_4,\\
\end{array}
\end{equation}
where $v_1,v_2,v_3,w_1,w_2,w_3,w_4$ are monomials. Without loss of generality we may assume that $v_1=v_3=w_1=w_4=1$.
The degree of the monomials $v_2,w_2$ and $w_3$ is lower than $n$. Hence, by the inductive assumption, modulo the fourth Cappeli identity
(Lemma \ref{Lemma 3} (iii)), $v_2,w_2$ and $w_3$ are equivalent to identities in three variables.
By Corollary \ref{Corollary 2}, in (\ref{eq 9}) we may assume that $v_2$ and $w_2$ are replaced by $1, y_1,[y_1,y_2],[y_1,y_2]y_3$,
and $w_3$ is replaced by $1,y_4,[y_4,y_5],y_4[y_5,y_6]$. It follows from (\ref{eq 2}) that $[x_1,x_2][x_3,x_4]=[x_3,x_4][x_1,x_2]$
and we can move the commutator $[y_1,y_2]$ to the first position, e.g.
\[
\sum(-1)^{\sigma}[x_{\sigma(1)},x_{\sigma(2)}][y_1,y_2][x_{\sigma(3)},x_j]=[y_1,y_2]\sum(-1)^{\sigma}[x_{\sigma(1)},x_{\sigma(2)}][x_{\sigma(3)},x_j].
\]
Using arguments for symmetry instead of (\ref{eq 9}) it is sufficient to consider the cases
\begin{equation}\label{eq 10}
z=\sum(-1)^{\sigma}[x_{\sigma(1)},x_{\sigma(2)}]y[x_{\sigma(3)},x_4],
\end{equation}
\begin{equation}\label{eq 11}
z_1=\sum(-1)^{\sigma}[x_{\sigma(1)},x_4][x_{\sigma(2)},x_5][x_{\sigma(3)},x_6],
\end{equation}
\begin{equation}\label{eq 12}
z_2=\sum(-1)^{\sigma}[x_{\sigma(1)},x_4]y_1[x_{\sigma(2)},x_5][x_{\sigma(3)},x_6],
\end{equation}
\begin{equation}\label{eq 13}
z_3=\sum(-1)^{\sigma}[x_{\sigma(1)},x_4]y_1[x_{\sigma(2)},x_5]y_2[x_{\sigma(3)},x_6].
\end{equation}
Since the statement of the proposition is true for polynomials of degree 4 and 5, we obtain that in $F$
\[
z=y\sum(-1)^{\sigma}[x_{\sigma(1)},x_{\sigma(2)}][x_{\sigma(3)},x_4]+\sum(-1)^{\sigma}[x_{\sigma(1)},x_{\sigma(2)},y][x_{\sigma(3)},x_4]=0.
\]
We shall write (\ref{eq 10}) in a more detailed form:
\[
2([x_1,x_2]y[x_3,x_4]+[x_2,x_3]y[x_1,x_4]-[x_1,x_3]y[x_2,x_4])=0,
\]
\begin{equation}\label{eq 14}
\sum(-1)^{\sigma}[x_{\sigma(1)},x_3]y[x_{\sigma(2)},x_4]=\frac{1}{2}\sum(-1)^{\sigma}[x_{\sigma(1)},x_{\sigma(2)}]y[x_3,x_4].
\end{equation}
Similarly, for $y=1$ (or from (\ref{eq 4})) we obtain
\begin{equation}\label{eq 15}
\sum(-1)^{\sigma}[x_{\sigma(1)},x_3][x_{\sigma(2)},x_4]=\frac{1}{2}\sum(-1)^{\sigma}[x_{\sigma(1)},x_{\sigma(2)}][x_3,x_4].
\end{equation}
We apply the identities (\ref{eq 14}) and (\ref{eq 15}) to (\ref{eq 11}), (\ref{eq 12}) and (\ref{eq 13}) and obtain that they follow from
the identities (\ref{eq 4}) and (\ref{eq 10}), e.g.
\[
z_2=\frac{1}{2}\left(\sum(-1)^{\sigma}[x_{\sigma(1)},x_4]y_1[x_{\sigma(2)},x_{\sigma(3)}]\right)[x_5,x_6]=0.
\]
The proof of the proposition is completed.
\end{proof}

\begin{corollary}\label{Corollary 5}
The weak identity
\begin{equation}\label{eq 16}
\begin{array}{c}
[x_1,x_2,x_4,\ldots]\cdots[x_3,x_5,\ldots]\\
\\
=[x_1,x_3,x_4,\ldots]\cdots[x_2,x_5,\ldots]-[x_2,x_3,x_4,\ldots]\cdots[x_1,x_5,\ldots].\\
\end{array}
\end{equation}
holds in the algebra $F$.
\end{corollary}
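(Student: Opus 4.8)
The plan is to obtain \eqref{eq 16} directly from Proposition~\ref{Proposition 4} by alternating three carefully chosen variables. Fix a product of commutators (each of length at least two) in which one factor has the form $[x_1,x_2,x_4,\ldots]$ and another has the form $[x_3,x_5,\ldots]$, and denote by $\phi(a,b,c)$ the polynomial obtained from it by placing $x_a,x_b$ in the first two slots of the former commutator and $x_c$ in the first slot of the latter, all remaining entries of these commutators and all remaining factors of the product being left untouched; thus the left-hand side of \eqref{eq 16} is $\phi(1,2,3)$, while the two terms on the right-hand side are $\phi(1,3,2)$ and $\phi(2,3,1)$. First I would form
\[
Z=\sum_{\sigma\in\text{Sym}(3)}(-1)^{\sigma}\phi(\sigma(1),\sigma(2),\sigma(3)),
\]
which is a multilinear polynomial in some number $n$ of variables and, being a linear combination of products of commutators of length $\geq 2$, is proper; it is moreover alternating in $x_1,x_2,x_3$. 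Hence its image lies in $\widetilde{\Gamma}_n$ and has three alternating variables, so by Proposition~\ref{Proposition 4} we get $Z=0$ in $F$.

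Next I would expand $Z$ and collapse it. Using only the antisymmetry of the commutator in its two leftmost arguments, that is $\phi(b,a,c)=-\phi(a,b,c)$ (valid because $[x_b,x_a,\ldots]=-[x_a,x_b,\ldots]$ and the product is multilinear in its factors), the six signed summands combine in pairs to give
\[
Z=2\bigl(\phi(1,2,3)-\phi(1,3,2)+\phi(2,3,1)\bigr).
\]
Since $\text{char }K=0$, from $Z=0$ we may divide by $2$ and obtain $\phi(1,2,3)=\phi(1,3,2)-\phi(2,3,1)$, which is precisely \eqref{eq 16} after rewriting $\phi$ in terms of the original commutator products.

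There is essentially no obstacle here beyond bookkeeping. The two points to be careful about are: (a) verifying that $Z$ genuinely lies in $\widetilde{\Gamma}_n$, so that Proposition~\ref{Proposition 4} applies — this holds because $Z$ is a sum of products of long commutators in $n$ distinct variables, hence proper and multilinear; and (b) correctly tracking the six signed terms of the $\text{Sym}(3)$-sum when combining them through the antisymmetry relation, noting that a transposition swapping $x_c$ with one of $x_a,x_b$ gives a genuinely different polynomial (no simple sign relation), which is why only the three terms $\phi(1,2,3)$, $\phi(1,3,2)$, $\phi(2,3,1)$ survive. The remaining commutator factors of the product, and any extra entries inside the two distinguished commutators, play no role and simply ride along unchanged throughout.
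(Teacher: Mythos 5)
Your proof is correct and follows the paper's own route: the paper simply observes that $x_1,x_2,x_3$ alternate in \eqref{eq 16} and invokes Proposition~\ref{Proposition 4}, which is exactly what your explicit alternating sum $Z$ and the collapse $Z=2\bigl(\phi(1,2,3)-\phi(1,3,2)+\phi(2,3,1)\bigr)$ make precise. The only difference is that you spell out the bookkeeping the paper leaves implicit.
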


\begin{proof}
The corollary follows immediately from Proposition \ref{Proposition 4}
because the variables $x_1,x_2,x_3$ alternate in the identity (\ref{eq 16}).
\end{proof}

\section{Weak identities in two variables}\label{Section 3}

In the proof of \cite[Proposition 3]{10} it was established that
\begin{equation}\label{eq 17}
B_2/(B_2\cap T(K_2,H(K_2)))\cong\sum N_2(p+q,p),\quad p>0,q\geq 0.
\end{equation}
In virtue of Proposition \ref{Proposition 4} and the embedding (\ref{eq 5}) the main theorem will be proved
if we show that the Hilbert series of the modules $B_2/(B_2\cap T(K_2,H(K_2)))$ and $\widetilde{B}_2$ coincide.
Hence till the end of the paper it is sufficient to work in $\widetilde{B}_2$.

\begin{proposition}\label{Proposition 6}
$\widetilde{B}_2^{(n)}\cong B_2^{(n)}/(B_2^{(n)}\cap T(K_2,H(K_2)))$, $n\leq 6$.
\end{proposition}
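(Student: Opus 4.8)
The plan is to establish the isomorphism by proving that the two graded components have the same dimension in each degree $n \le 6$, degree by degree, using the structural results already available. By \eqref{eq 17} we know the left-hand side precisely: $B_2/(B_2\cap T(K_2,H(K_2)))$ decomposes as $\sum N_2(p+q,p)$ with $p>0$, $q\ge 0$, so in degree $n=2p+q$ the dimension is $\dim N_2(p+q,p)=q+1$ summed over the admissible $(p,q)$ with $2p+q=n$ and $p\ge 1$; this is a finite, explicit number for each $n\le 6$. Since $T\subseteq T(K_2,H(K_2))$ by \eqref{eq 5}, the canonical surjection $\widetilde B_2^{(n)}=B_2^{(n)}/(B_2^{(n)}\cap T)\twoheadrightarrow B_2^{(n)}/(B_2^{(n)}\cap T(K_2,H(K_2)))$ exists, so it suffices to show $\dim\widetilde B_2^{(n)}$ does not exceed the known dimension of the target for each $n\le 6$.

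First I would write down, for each $n$ from $2$ to $6$, a spanning set of $B_2^{(n)}$ modulo $T$. By Proposition~\ref{Proposition 4} all polynomials in three alternating variables vanish in $F$, so by Lemma~\ref{Lemma 3}(i) (applied with $k=3$, $m=2$) the module $\widetilde B_2$ has irreducible components only with Young diagrams of at most two rows; combined with the fact that $B_2^{(n)}$ is built from products of commutators in two variables, this already restricts the shape drastically. Concretely, in two variables the only length-two commutator (up to sign and the action of $GL(2,K)$) is $[x_1,x_2]$, and higher proper elements are obtained by iterating inner commutation; using \eqref{eq 2} (metabelianity) products of two commutators commute, and using Corollary~\ref{Corollary 5} together with \eqref{eq 6}, \eqref{eq 7} one can collapse most of the would-be spanning monomials. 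I would list the surviving proper monomials of each degree $n\le 6$ — roughly, words of the form $[x_{i_1},x_{i_2},\dots]$ times at most one further commutator factor, with the inner structure constrained by \eqref{eq 6}--\eqref{eq 7} — and count them.

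The main obstacle, and the place where the degree bound $n\le 6$ enters essentially, is that for small $n$ the spanning set produced this way has exactly the predicted size, so the surjection is forced to be an isomorphism, whereas for larger $n$ extra relations beyond \eqref{eq 1}, \eqref{eq 2} would in principle be needed and the naive spanning count would overshoot. Thus the heart of the argument is a careful bookkeeping of proper monomials in $\widetilde B_2^{(n)}$ for $n=2,3,4,5,6$, reducing each via \eqref{eq 2}, Corollary~\ref{Corollary 5}, and \eqref{eq 6}--\eqref{eq 7}, and checking in each degree that the number of irreducible $GL(2,K)$-constituents (equivalently, after passing to the $\mathrm{Sym}(n)$-picture via \cite[Lemma 2.3]{9}, the multiplicities $k_\lambda$ with $\lambda$ of at most two rows) matches $\sum_{2p+q=n,\,p\ge1}1$ copies of $N_2(p+q,p)$. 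I expect the degree-$6$ case to be the only genuinely laborious one; degrees $2$ through $4$ are immediate from \eqref{eq 3} and degree $5$ is essentially the content of the $n=5$ analysis already carried out inside the proof of Proposition~\ref{Proposition 4}.

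Finally, having matched dimensions in every degree $n\le6$, the surjection $\widetilde B_2^{(n)}\to B_2^{(n)}/(B_2^{(n)}\cap T(K_2,H(K_2)))$ is an isomorphism for those $n$, which is the assertion of the proposition. I would also note that the same bookkeeping, once organized, identifies the explicit basis of $\widetilde B_2^{(n)}$, which will be convenient for the inductive step of the following section where one must push past $n=6$.
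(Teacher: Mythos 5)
Your overall skeleton is the same as the paper's: use the surjection $\widetilde B_2^{(n)}\twoheadrightarrow B_2^{(n)}/(B_2^{(n)}\cap T(K_2,H(K_2)))$ coming from \eqref{eq 5}, know the right-hand side from \eqref{eq 17}, and force equality by bounding $\widetilde B_2^{(n)}$ from above degree by degree. Degrees $2$--$4$ are indeed immediate, and for degree $5$ your appeal to \eqref{eq 7} is essentially right, provided you actually note that \eqref{eq 7} generates a copy of $N_2(3,2)$ inside $B_2^{(5)}\cap T$, which is exactly what is needed since $B_2^{(5)}\cong N_2(4,1)+2N_2(3,2)$ while \eqref{eq 17} allows only one $N_2(3,2)$.

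The genuine gap is degree $6$, which you defer to ``careful bookkeeping'' and never carry out; that bookkeeping is the entire content of the proof. Here $B_2^{(6)}\cong N_2(5,1)+3N_2(4,2)+2N_2(3^2)$ has dimension $16$, while the target allowed by \eqref{eq 17} is $N_2(5,1)+N_2(4,2)+N_2(3^2)$ of dimension $9$, so you must exhibit, inside the weak T-ideal $T$, relations spanning $2N_2(4,2)+N_2(3^2)$. These do not fall out of \eqref{eq 2}, Corollary~\ref{Corollary 5}, \eqref{eq 6}, \eqref{eq 7} by formal ``collapsing'' of monomials: the paper has to use the closure of $T$ under substitution of \emph{Jordan} elements, namely substituting $[x_1,x_2,x_3]$ into \eqref{eq 2} to get that even-length commutators commute (hence $[[y,x,x,x],[y,x]]=0$), commutating \eqref{eq 7} with $x$ to get $[y,x,x]^2=-[y,x,x,x][y,x]$, and linearizing \eqref{eq 7} in $y$ and substituting the Jordan element $y^2$ to get $[[y,x,y],[y,x,x]]=-4[y,x]^3$. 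Without these three explicit consequences (the paper's \eqref{eq 19}, \eqref{eq 20}, \eqref{eq 21}), your spanning count in degree $6$ stalls above $9$ (the generators $w_{16},w_{25}$ and $w_9,w_{13}$ remain independent) and the surjection cannot be promoted to an isomorphism. So the proposal is a correct plan whose decisive step is missing, and the mechanism it is missing --- weak consequences obtained by Jordan substitutions --- is not signalled anywhere in your outline; note also that these same relations are what the next proposition's spanning argument relies on, so they cannot be borrowed from later results without circularity.
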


\begin{proof}
It follows from the decomposition into a sum of irreducible submodules of $B_2^{(2)},B_2^{(3)}$ and $B_2^{(4)}$
and from (\ref{eq 17}) that $B_2^{(2)},B_2^{(3)}$ and $B_2^{(4)}$ intersect trivially with $T$.
The identity (\ref{eq 7}) generates a $GL(2,K)$-module isomorphic to $N_2(3,2)$.
Since $B_2^{(5)}\cong N_2(4,1)+2N_2(3,2)$, we obtain that (\ref{eq 7}) is the only identity in two variables in $B_2^{(5)}$.
It follows from \cite[Propositions 2.5 and 2.6]{4} that
\[
B_2^{(6)}\cong N_2(5,1)+3N_2(4,2)+2N_2(3^2)
\]
and the corresponding submodules are generated by the polynomials

\noindent $N_2(5,1)$: $w_1=y(\text{ad}x)^5$;

\noindent $N_2(4,2)$: $w_2=[[y,x,x,x],[y,x]]$, $w_{16}=[y,x][y,x,x,x]$, $w_{25}=[y,x,x]^2$;

\noindent $N_2(3^2)$: $w_9=2[[y,x,x],[y,x,y]]$, $w_{13}=[y,x]^3$.

\noindent The commutator $[x_1,x_2,x_3]$ is a Jordan element and as a consequence of (\ref{eq 2}) we obtain the weak identity
$[[[x_1,x_2,x_3],x_4],[x_5,x_6]]=0$. Hence the commutators of even length commute in $F$:
\begin{equation}\label{eq 18}
[x_1,\ldots,x_{2k}][y_1,\ldots,y_{2l}]=[y_1,\ldots,y_{2l}][x_1,\ldots,x_{2k}].
\end{equation}
In particular
\begin{equation}\label{eq 19}
[[y,x,x,x],[y,x]]=0.
\end{equation}
It follows from the identities (\ref{eq 7}) and (\ref{eq 18}) that
\[
0=[[y,x,x]\circ[y,x],x]=[y,x,x,x]\circ [y,x]+[y,x,x]\circ[y,x,x],
\]
\begin{equation}\label{eq 20}
[y,x,x]^2=-[y,x,x,x][y,x].
\end{equation}
We linearize (\ref{eq 7}) in $y$:
\[
[y,x,x]\circ[z,x]+[z,x,x]\circ[y,x]=0
\]
and replace $z$ by $y^2$:
\[
0=[y,x,x]\circ[y^2,x]+[y^2,x,x]\circ[y,x]
\]
\[
=[y,x,x]\circ(y\circ[y,x])+([y,x,x]\circ y+2[y,x]^2)\circ [y,x].
\]
Working modulo (\ref{eq 18}) we obtain
\[
g(x,y)=[y,x,x,y][y,x]+[y,x,y][y,x,x]+2[y,x]^3=0.
\]
In the latter equation we change the places of $x$ and $y$ and subtract
\[
g(x,y)-g(y,x)=[[y,x,y],[y,x,x]]+4[y,x]^3=0;
\]
\begin{equation}\label{eq 21}
[[y,x,y],[y,x,x]]=-4[y,x]^3.
\end{equation}
It follows from (\ref{eq 19}), (\ref{eq 20}) and (\ref{eq 21}) that
$B_2^{(6)}\cap T\cong 2N_2(4,2)+N_2(3^2)$,
i.e.
\[
\widetilde{B}_2^{(6)}\cong B_2^{(6)}/(B_2^{(6)}\cap T(K_2,H(K_2))).
\]
\end{proof}

\begin{proposition}\label{Proposition 7}
The vector space $\widetilde{B}_2$ is spanned by
\[
([y,x](\text{\rm ad}x)^k(\text{\rm ad}y)^l)[y,x]^{q-1},\quad k,l\geq 0,\quad q\geq 1.
\]
\end{proposition}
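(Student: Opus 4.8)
The plan is to argue by induction on the degree $n$ of a homogeneous element $u$ of $\widetilde B_2$, reducing $u$ to a linear combination of the stated elements.

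For $n\le 6$ this follows from Proposition \ref{Proposition 6}: there $\widetilde B_2^{(n)}\cong B_2^{(n)}/(B_2^{(n)}\cap T(K_2,H(K_2)))$, so it suffices to rewrite the module generators occurring in its proof. In degrees $\le 4$ the basic commutators and $[y,x]^2$ are already combs times powers of $[y,x]$; in degree $5$ one may take as surviving generators the comb $[y,x,x,x,x]$ and $[y,x,x][y,x]$; and in degree $6$, by (\ref{eq 18})--(\ref{eq 21}), one has $w_1=[y,x,x,x,x,x]$, $w_2=0$, $w_{16}=[y,x,x,x][y,x]$, $w_{25}=-[y,x,x,x][y,x]$, $w_9=8[y,x]^3$, $w_{13}=[y,x]^3$, all of the required shape.

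For the inductive step, $n\ge 7$, recall that by the two-variable form of the result of \cite{6} used in the proof of Proposition \ref{Proposition 4}, $\widetilde B_2$ is spanned by products of combs; so take $u=c_1\cdots c_r$ with each $c_i$ a comb. Using (\ref{eq 18}) (the factors $[y,x]$ commute with combs of even length) together with (\ref{eq 7}) and its consequences $c\circ[y,x]=0$ for every odd comb $c$ (these follow from (\ref{eq 2}) by the substitutions used to derive (\ref{eq 18})--(\ref{eq 21})), one moves every factor equal to $[y,x]$ to the far right, so that $u=c_1\cdots c_s\,[y,x]^{q-1}$ with each $c_i$ a comb of length $\ge 3$. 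Now I run a secondary induction on $s$: if $s\le 1$ we are done; if $s\ge 2$, I merge two of the long combs by rewriting their product $cc'=\tfrac12\,c\circ c'+\tfrac12[c,c']$. The bracket $[c,c']$ is a single commutator; expanding it again by the result of \cite{6} as a combination of products of combs and re-applying the previous normalisation, one obtains products with fewer long combs, handled by the secondary induction. The symmetric part $c\circ c'$ is treated by cases on the parities of $c,c'$: two even combs by (\ref{eq 18}) and the linearisations of (\ref{eq 20}); one even and one odd comb by the linearised relation $c\circ[y,x]=0$; two odd combs by the linearisations of (\ref{eq 21}), which rewrite $c\circ c'$ as a power of $[y,x]$ times a strictly shorter comb. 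Collecting the outcomes and iterating reduces $u$ to the required form. One may shorten the case analysis by noting that $\widetilde B_2\cong\sum N_2(\lambda_1,\lambda_2)$ (Lemma \ref{Lemma 3}(i) and Proposition \ref{Proposition 4}), so that it is enough to carry out the reduction on highest-weight vectors and transport it by the lowering operator of $GL(2,K)$.

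The main obstacle is this merging of two long combs, and in particular the treatment of the symmetric part $c\circ c'$: the identities (\ref{eq 14}), (\ref{eq 15}), (\ref{eq 18}), (\ref{eq 20}) and (\ref{eq 21}) dispose of the low-degree and most symmetric instances, but for a general product $[y,x,x^ky^l]\cdot[y,x,x^{k'}y^{l'}]$ one must choose the right linearisations and regroup the resulting sum so that no term outside the claimed span survives --- equivalently, check that the span of the elements $([y,x](\operatorname{ad}x)^k(\operatorname{ad}y)^l)[y,x]^{q-1}$ is a $GL(2,K)$-submodule; this closure verification is the delicate point.
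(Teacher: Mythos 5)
Your reduction collapses exactly at the point you yourself flag as "the delicate point", and that point is the whole content of the proposition, so this is a genuine gap rather than a routine verification left to the reader. Splitting $cc'=\tfrac12 c\circ c'+\tfrac12[c,c']$ does not make progress: the Lie element $[c,c']$, re-expanded as products of combs, may again contain products with two combs of length $\geq 3$, so your secondary induction on the number of long combs has no reason to terminate; and the identities you invoke for the symmetric part do not do what you claim. Identity (\ref{eq 18}) only says even commutators commute (it does not shorten anything), (\ref{eq 20}) and (\ref{eq 21}) are specific degree-$6$ relations between length-$3$ combs, and the relation $c\circ[y,x]=0$ concerns the factor $[y,x]$ only; none of them, nor their linearisations, rewrites a general product $[y,x,x^ky^l]\cdot[y,x,x^{k'}y^{l'}]$ inside the claimed span, and no argument is offered that the span is closed under the needed manipulations.

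The missing idea is precisely the one the paper uses: Proposition \ref{Proposition 4} through its Corollary \ref{Corollary 5}. Since commutators of odd length are Jordan elements, one may substitute $[y,x,z_1]$ for one of the three alternating variables in (\ref{eq 16}) and obtain, for a trailing comb of length $\geq 4$,
\[
[y,x,t_1,\ldots][y,x,z_1,z_2,\ldots]=[[y,x,z_1],x,t_1,\ldots][y,z_2,\ldots]-[[y,x,z_1],y,t_1,\ldots][x,z_2,\ldots],
\]
which transfers two letters from the second factor into the first; iterating this shrinks every factor after the first down to $[y,x]$, and a simple induction on degree finishes the proof. Before that step the paper does not argue comb by comb as you do, but derives the parity bookkeeping globally: the vanishing in $\widetilde{\Gamma}_5$ of the module generated by $[x_1,x_2,x_3]\circ[x_4,x_5]$ gives the anticommutation (\ref{eq 22}) of even with odd commutators (your relation $c\circ[y,x]=0$ is the two-variable special case), and the degree-$6$ module analysis together with (\ref{eq 19})--(\ref{eq 21}) gives (\ref{eq 23}), expressing any product involving an odd commutator through even ones; these replace your case analysis on parities. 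Your base cases $n\leq 6$ and the use of the comb spanning set are consistent with the paper, but without (\ref{eq 16}) the inductive step is not established.
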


\begin{proof}
Let us consider the $\text{Sym}(5)$-submodule $M$ of $\Gamma_5$ generated by the polynomial $[x_1,x_2,x_3]\circ[x_4,x_5]$.
By \cite[Proposition 2.4]{4} $M\cong M(3,2)+M(3,1^2)+M(2^2,1)+M(2,1^3)$. It follows from Section \ref{Section 2} that
$M$ is equal to 0 in $\widetilde{\Gamma}_5$ and in $\widetilde{\Gamma}_5$
\[
[x_1,x_2][x_3,x_4,x_5]=-[x_3,x_4,x_5][x_1,x_2].
\]
As in (\ref{eq 18}) we obtain
\begin{equation}\label{eq 22}
[x_1,\ldots,x_{2k}][y_1,\ldots,y_{2l+1}]=-[y_1,\ldots,y_{2l+1}][x_1,\ldots,x_{2k}].
\end{equation}
Similarly, let $M_1$ be the $\text{Sym}(6)$-submodule of $\Gamma_6$ generated by the polynomials
$[x_1,x_2,x_3][x_4,x_5,x_6]$ and $[x_1,x_2][x_3,x_4,x_5,x_6]$. Then
\[
M_1\cong \sum M(k_1,k_2)+\sum M(l_1,l_2,\ldots,l_m),\quad m>2,
\]
is the decomposition of $M_1$ into a sum of irreducible components.
The second summand of $M_1$ is equal to zero in $\widetilde{\Gamma}_6$ in virtue of Proposition \ref{Proposition 4}.
It follows from (\ref{eq 19}), (\ref{eq 20}) and (\ref{eq 21}) that the components $M(k_1,k_2)$ are expressed as linear combinations of
$[x_{i_1},x_{i_2},x_{i_3},x_{i_4}][x_{i_5},x_{i_6}]$ and $[x_{i_1},x_{i_2}][x_{i_3},x_{i_4}][x_{i_5},x_{i_6}]$. Hence in $\widetilde{\Gamma}_6$
\[
[x_1,x_2,x_3][x_4,x_5,x_6]=\sum\alpha_i[x_{i_1},x_{i_2},x_{i_3},x_{i_4}][x_{i_5},x_{i_6}]
\]
\[
+\sum\beta_i[x_{i_1},x_{i_2}][x_{i_3},x_{i_4}][x_{i_5},x_{i_6}]
\]
for suitable $\alpha_i,\beta_i\in K$. By analogy with (\ref{eq 18}) and (\ref{eq 22}) we derive that
\begin{equation}\label{eq 23}
[x_1,\ldots,x_{2k+1}][x_{2k+2},\ldots,x_{2l}]=\sum \gamma[x_{i_1},\ldots,x_{i_{2p}}]\cdots[x_{j_1},\ldots,x_{j_{2q}}].
\end{equation}
(All commutators in the right hand side are of even length.)

By Lemma \cite[Lemma 1.5]{8} the vector space $B_2$ is spanned by
\begin{equation}\label{eq 24}
([y,x](\text{ad}x)^{k_1}(\text{ad}y)^{l_1})\cdots ([y,x](\text{ad}x)^{k_m}(\text{ad}y)^{l_m}),\quad k_i,l_i\geq 0.
\end{equation}
Using the identities (\ref{eq 22}) and (\ref{eq 23}) we may assume that $\widetilde{B}_2$ is spanned only by the elements from (\ref{eq 24})
with even $k_m+l_m$ when $m>1$. The commutators of odd length are in $SJ_2$ and from the identity (\ref{eq 16}) we obtain that
\[
[y,x,t_1,\ldots][y,x,z_1,x_2,\ldots]=[y,x,t_1,\ldots][[y,x,z_1],z_2,\ldots]
\]
\[
=[[y,x,z_1],x,t_1,\ldots][y,z_2,\ldots]-[[y,x,z_1],y,t_1,\ldots][x,z_2,\ldots],
\]
i.e. the elements in $\widetilde{B}_2$ are linear combinations of those elements in (\ref{eq 24}) with $k_m=l_m=0$ when $m>0$.
The proof is completed by easy induction on the degree of the elements (\ref{eq 24}).
\end{proof}

\begin{proposition}\label{Proposion 8}
The Hilbert series of $B_2/(B_2\cap T(K_2,H(K_2)))$ and $\widetilde{B}_2$ coincide.
\end{proposition}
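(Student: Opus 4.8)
The plan is to squeeze the graded dimensions of $\widetilde{B}_2$ between the lower bound coming from the inclusion (\ref{eq 5}) and the upper bound coming from the spanning set of Proposition \ref{Proposition 7}. Since $T\subseteq T(K_2,H(K_2))$ by (\ref{eq 5}), we have $B_2^{(n)}\cap T\subseteq B_2^{(n)}\cap T(K_2,H(K_2))$ for every $n$, so the canonical homomorphism induces a surjection of graded vector spaces $\widetilde{B}_2\twoheadrightarrow B_2/(B_2\cap T(K_2,H(K_2)))$; hence $\dim\widetilde{B}_2^{(n)}\geq\dim\bigl(B_2^{(n)}/(B_2^{(n)}\cap T(K_2,H(K_2)))\bigr)$ for all $n$. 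The two Hilbert series therefore coincide as soon as the reverse inequality holds in each degree, and this is what I would verify by a counting argument.

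First I would do the degree bookkeeping for the spanning set of Proposition \ref{Proposition 7}. The element $([y,x](\text{ad}x)^k(\text{ad}y)^l)[y,x]^{q-1}$ is homogeneous of degree $(2+k+l)+2(q-1)=k+l+2q$. Thus, for a fixed $n$, Proposition \ref{Proposition 7} tells us that $\widetilde{B}_2^{(n)}$ is spanned by the elements with $k,l\geq 0$, $q\geq 1$ and $k+l+2q=n$; for each admissible value $q\in\{1,\dots,\lfloor n/2\rfloor\}$ there are exactly $n-2q+1$ choices of the pair $(k,l)$, so
\[
\dim\widetilde{B}_2^{(n)}\leq\sum_{q=1}^{\lfloor n/2\rfloor}(n-2q+1).
\]

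Next I would read off the dimension of $B_2/(B_2\cap T(K_2,H(K_2)))$ in degree $n$ from (\ref{eq 17}). The irreducible $GL(2,K)$-module $N_2(p+q,p)$ has dimension $(p+q)-p+1=q+1$ and lies in homogeneous degree $(p+q)+p=2p+q$. Collecting in (\ref{eq 17}) the summands $N_2(p+q,p)$ with $2p+q=n$, that is, $q=n-2p$ and $1\leq p\leq\lfloor n/2\rfloor$, gives
\[
\dim\bigl(B_2^{(n)}/(B_2^{(n)}\cap T(K_2,H(K_2)))\bigr)=\sum_{p=1}^{\lfloor n/2\rfloor}(n-2p+1),
\]
which is exactly the upper bound found in the previous step. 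Combining the three displayed relations we get $\dim\widetilde{B}_2^{(n)}\leq\dim\bigl(B_2^{(n)}/(B_2^{(n)}\cap T(K_2,H(K_2)))\bigr)\leq\dim\widetilde{B}_2^{(n)}$, so equality holds in every degree and the two Hilbert series coincide.

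Essentially no genuine difficulty remains: the argument is a direct assembly of Proposition \ref{Proposition 7} and the decomposition (\ref{eq 17}), and the only point demanding care is the elementary reindexing that matches the count of spanning monomials with the multiplicities and dimensions of the modules $N_2(p+q,p)$. As a by-product the squeeze shows that the spanning set of Proposition \ref{Proposition 7} is in fact a basis of $\widetilde{B}_2$ and that the inclusion (\ref{eq 5}) is an equality, which is precisely what completes the proof of the Theorem; but these facts are not needed for the statement of Proposition \ref{Proposion 8} itself.
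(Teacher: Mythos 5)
Your argument is correct and is essentially the squeeze used in the paper: the inclusion (\ref{eq 5}) gives the lower bound, the spanning set of Proposition \ref{Proposition 7} gives the upper bound, and the two counts agree in every degree. The only cosmetic difference is that you recover the graded dimensions of $B_2/(B_2\cap T(K_2,H(K_2)))$ degree by degree from the decomposition (\ref{eq 17}) and the dimension formula for $N_2(p+q,p)$, whereas the paper quotes the closed-form Hilbert series $1+t^2(1-t)^{-2}(1-t^2)^{-1}$ from the cited earlier work and compares it with the generating function of the spanning set --- these are the same computation.
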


\begin{proof}
It follows from \cite[Theorem 2.2]{9} and \cite[Proposition 2]{10} that
\[
H_1(t)=H(B_2/(B_2\cap T(K_2,H(K_2))),t)=1+t^2(1-t)^{-2}(1-t^2)^{-1}.
\]
On the other hand, it follows from Proposition \ref{Proposition 7} that the coefficients of the Hilbert series 
$H_2(t)=H(\widetilde{B}_2,t)$ are bounded from above by the coefficients of the Hilbert series $H_3(t)$
of the vector subspace of $A(x,y)$ with basis 1 and $([y,x](\text{ad}x)^k(\text{ad}y)^l)[y,x]^{q-1}$, $k,l\geq 0$, $q\geq 1$.
For the series $H_3(t)$ we have
\[
H_3(t)=1+t^2\sum t^{k+l}(t^2)^{q-1}=1+t^2(1-t)^{-2}(1-t)^{-2}=H_1(t).
\]
Since $H_1(t)\leq H_2(t)$ by (\ref{eq 5}) and $H_2(t)\leq H_3(t)$ we derive that $H_1(t)=H_2(t)$.
This completes the proof of the proposition and hence also of the main theorem.
\end{proof}

\end{document}